\theoremstyle{definition}
\newtheorem{definition}{Definition}
\newtheorem{lemma}[definition]{Lemma}
\theoremstyle{remark}
\newcounter{enumctr}
\newcommand{\R}{\mathbb{R}}
\newcommand{\N}{\mathbb{N}}
\newcommand{\C}{\mathbb{C}}
\renewcommand{\phi}{\varphi}
\begin{document}
\title{\vspace*{-10mm}
On Some Special Properties\\
of Mittag-Leffler Functions}
\author{
H.T.~Tuan\footnote{\tt httuan@math.ac.vn, \rm Institute of Mathematics, Vietnam Academy of Science and Technology, 18 Hoang Quoc Viet, 10307 Ha Noi, Viet Nam}}
\date{24 January 2016}
\maketitle
\begin{abstract}
Our aim in this report is to investigate the asymptotic behavior of Mittag-Leffle functions. We give some estimates involving the Mittag-Leffler functions and their derivatives.
\end{abstract}
\section{Mittag--Leffler functions}\label{MLF}
Similar to the exponential functions frequently used in investigation of integer-order systems, Mittag-Leffle functions naturally occur in the solutions of fractional differential equations. In this report, we establish some estimates involving the Mittag-Leffler functions and their derivatives. The results are needed for the proofs of the stability theorems presented in \cite{Cong}, \cite{Cong_1}, \cite{Cong_2}, and \cite{Cong_3}. As a preparation, following Podlubny~\cite{Podlubny} we introduce some notations as follows.

Let $\gamma(\varepsilon,\theta)$, $\varepsilon>0,\,0<\theta\le \pi$, denote the contour in the complex plane which consists of the three parts
\begin{itemize}
\item [(i)] $\text{arg}(z)=-\theta$, $|z|\ge \varepsilon$;
\item [(ii)] $-\theta\le \text{arg}(z)\le \theta$, $|z|=\varepsilon$;
\item [(iii)] $\text{arg}(z)=\theta$, $|z|\ge \varepsilon$.
\end{itemize}
This contour $\gamma(\varepsilon,\theta)$ divides the complex plane $(z)$ into two domains, which we denote by $G^{-}(\varepsilon,\theta)$ and $G^{+}(\varepsilon,\theta)$. These domains lie correspondingly on the left and on the right side of the contour $\gamma(\varepsilon,\theta)$.
\begin{lemma}\label{lemma1}
Let $0<\alpha<1$ and $\beta$ be an arbitrary complex number. Then for an arbitrary $\varepsilon>0$ and $\frac{\pi\alpha}{2}<\theta<\pi\alpha$, we have
\begin{itemize}
\item [(i)] $$E_{\alpha,\beta}(z)=\frac{1}{2\alpha\pi i}\int_{\gamma(\varepsilon,\theta)}\frac{\exp{(\zeta^{\frac{1}{\alpha}})}\zeta^{\frac{1-\beta}{\alpha}}}{\zeta-z}d\zeta,\,\, z\in G^{-}(\varepsilon,\theta);$$
\item [(ii)] \begin{align*}
E_{\alpha,\beta}(z)=&\dfrac{1}{\alpha}z^{\frac{1-\beta}{\alpha}}\exp{(z^{\frac{1}{\alpha}})}+\frac{1}{2\alpha\pi i}\int_{\gamma(\varepsilon,\theta)}\frac{\exp{(\zeta^{\frac{1}{\alpha}})}\zeta^{\frac{1-\beta}{\alpha}}}{\zeta-z}d\zeta,\\
& z\in G^{+}(\varepsilon,\theta).
\end{align*}
\end{itemize}
\end{lemma}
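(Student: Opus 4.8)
The plan is to derive (i) from the power series $E_{\alpha,\beta}(z)=\sum_{k=0}^{\infty} z^{k}/\Gamma(\alpha k+\beta)$ (absolutely convergent for every $z$), rewriting each coefficient through Hankel's representation of the reciprocal Gamma function,
\[
\frac{1}{\Gamma(s)}=\frac{1}{2\pi i}\int_{\mathcal C}e^{t}\,t^{-s}\,dt ,
\]
where $\mathcal C$ may be chosen as the ``wedge'' contour formed by the rays $\arg t=\pm\psi$, $|t|\ge\rho$, joined by the arc $|t|=\rho$, for any $\rho>0$ and any $\psi\in(\tfrac{\pi}{2},\pi)$; the admissibility of this shape, and the independence of the value from $\rho$ and $\psi$, are the classical consequences of Cauchy's theorem together with the decay of $e^{t}t^{-s}$ as $\operatorname{Re}t\to-\infty$. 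Taking $\rho=\varepsilon^{1/\alpha}$ and $\psi=\theta/\alpha$ — legitimate precisely because $\tfrac{\pi\alpha}{2}<\theta<\pi\alpha$ forces $\psi\in(\tfrac{\pi}{2},\pi)$ — and substituting $t=\zeta^{1/\alpha}$, i.e.\ $\zeta=t^{\alpha}$, an orientation-preserving map carrying $\mathcal C$ onto $\gamma(\varepsilon,\theta)$, one obtains, after collecting the powers of $\zeta$ coming from $t^{-\alpha k-\beta}$ and $dt=\tfrac1\alpha\zeta^{1/\alpha-1}\,d\zeta$,
\[
\frac{1}{\Gamma(\alpha k+\beta)}=\frac{1}{2\alpha\pi i}\int_{\gamma(\varepsilon,\theta)}\frac{\exp(\zeta^{1/\alpha})\,\zeta^{(1-\beta)/\alpha}}{\zeta^{\,k+1}}\,d\zeta .
\]

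Next I would multiply by $z^{k}$, sum over $k$, and interchange summation and integration. For $|z|<\varepsilon$ this is justified: on $\gamma(\varepsilon,\theta)$ one has $|\zeta|\ge\varepsilon>|z|$, so $\sum_{k}|z|^{k}/|\zeta|^{k+1}=1/(|\zeta|-|z|)$, while along the two rays $\operatorname{Re}(\zeta^{1/\alpha})=|\zeta|^{1/\alpha}\cos(\theta/\alpha)<0$ (here the hypothesis $\theta>\tfrac{\pi\alpha}{2}$ is used essentially), so the series of moduli is dominated by an integrable function on $\gamma(\varepsilon,\theta)$ with $\exp(-c|\zeta|^{1/\alpha})$ decay — the factor $|\zeta^{(1-\beta)/\alpha}|$ contributing only polynomial growth since $\arg\zeta$ stays bounded on the contour. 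Summing the geometric series $\sum_{k\ge0}z^{k}/\zeta^{k+1}=1/(\zeta-z)$ then gives formula (i) for $|z|<\varepsilon$. Since the integral on the right of (i) is holomorphic in $z$ on $\mathbb C\setminus\gamma(\varepsilon,\theta)$, since $E_{\alpha,\beta}$ is entire, and since the open disc $\{|z|<\varepsilon\}$ is contained in the connected set $G^{-}(\varepsilon,\theta)$ (the component of $\mathbb C\setminus\gamma(\varepsilon,\theta)$ that contains that disc and the negative real axis), the identity extends to all of $G^{-}(\varepsilon,\theta)$ by analytic continuation. This settles (i).

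For (ii), let $z\in G^{+}(\varepsilon,\theta)$, so that $|z|>\varepsilon$ and $|\arg z|<\theta<\pi$, and fix $\varepsilon'>|z|$. Then $z\in G^{-}(\varepsilon',\theta)$, so part (i) applied with the enlarged radius $\varepsilon'$ gives $E_{\alpha,\beta}(z)=\tfrac{1}{2\alpha\pi i}\int_{\gamma(\varepsilon',\theta)}\exp(\zeta^{1/\alpha})\zeta^{(1-\beta)/\alpha}(\zeta-z)^{-1}\,d\zeta$. The contours $\gamma(\varepsilon',\theta)$ and $\gamma(\varepsilon,\theta)$ coincide for $|\zeta|\ge\varepsilon'$, so their difference is a finite closed curve, namely the positively oriented boundary of the annular sector $D=\{\varepsilon<|\zeta|<\varepsilon',\ |\arg\zeta|<\theta\}$; on $\overline{D}$ the integrand is holomorphic except for the simple pole at $\zeta=z\in D$ (the set $D$ avoiding the branch cut of $\zeta^{1/\alpha}$ and $\zeta^{(1-\beta)/\alpha}$). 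The residue theorem then yields $\tfrac{1}{2\alpha\pi i}\int_{\gamma(\varepsilon',\theta)}-\tfrac{1}{2\alpha\pi i}\int_{\gamma(\varepsilon,\theta)}=\tfrac1\alpha z^{(1-\beta)/\alpha}\exp(z^{1/\alpha})$, which rearranges into (ii).

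The two places calling for care are: the precise justification that the Hankel contour may be given the wedge shape with the stated $\rho$ and $\psi$ and that the representation of $1/\Gamma$ is independent of that choice (so that $\zeta=t^{\alpha}$ really transforms it into an integral over $\gamma(\varepsilon,\theta)$); and, in (ii), tracking the orientations so that $\gamma(\varepsilon',\theta)-\gamma(\varepsilon,\theta)$ is genuinely the positively oriented $\partial D$ and the residue appears with the correct sign. Everything else — the dominated-convergence estimate for the interchange and the analytic continuation — is routine.
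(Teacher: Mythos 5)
Your proposal is correct and complete. Note, however, that the paper itself gives no proof of this lemma --- it simply cites Podlubny (Theorem~1.1, p.~30) --- and the argument you give (Hankel's integral for $1/\Gamma$ over the wedge contour, the substitution $\zeta=t^{\alpha}$, termwise summation of the geometric series for $|z|<\varepsilon$ followed by analytic continuation to $G^{-}(\varepsilon,\theta)$, and the residue at $\zeta=z$ picked up when enlarging $\varepsilon$ to $\varepsilon'>|z|$ for part (ii)) is essentially the classical proof found in that reference, so you have in effect supplied the omitted proof rather than a genuinely different one.
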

\begin{proof}
See Podlubny~\cite[Theorem 1.1, p.\ 30]{Podlubny}
\end{proof}
\begin{lemma}\label{lemma3}
Let $\lambda\in\mathbb C\setminus\{0\}$, $\frac{\alpha \pi}{2}<\theta< \alpha\pi$ and $\theta_0 \in (0,\theta-\frac{\alpha\pi}{2})$ satisfying $$| \theta-|\text{arg}(\lambda)||\ge\theta_0.$$ We define
\begin{align*}
m(\alpha,\lambda)&=\max\left\{\frac{\int_{\gamma(1,\theta)}|\exp{(\zeta^{\frac{1}{\alpha}})}|d\zeta}{2\alpha\pi |\lambda|\sin\theta_0},\frac{\int_{\gamma(1,\theta)}|\exp{(\zeta^{\frac{1}{\alpha}})}\zeta^{\frac{1}{\alpha}}|d\zeta}{2\alpha|\lambda|^2\sin\theta_0}\right\},\\
t_0&=\dfrac{1}{|\lambda|^{\frac{1}{\alpha}}(1-\sin\theta_0)^{\frac{1}{\alpha}}}.
\end{align*}
Then 
\begin{itemize}
\item [(i)] If $\lambda\in\{z\in\mathbb{C}:|\text{arg}(z)|<\frac{\alpha\pi}{2}\}$ and $t\geq t_0,$ we have
$$\left|E_\alpha(\lambda t^{\frac{1}{\alpha}})-\frac{1}{\alpha}\exp{(\lambda^{\frac{1}{\alpha}}t)}\right|\le \frac{m(\alpha,\lambda)}{t^{\alpha}};$$
\item [(ii)] If $\lambda\in\{z\in\mathbb{C}:|\text{arg}(z)|<\frac{\pi \alpha}{2}\}$ and $t\geq t_0,$ we have
$$\left|t^{\alpha-1}E_{\alpha,\alpha}(\lambda t^{\alpha})-\frac{1}{\alpha}\lambda^{\frac{1}{\alpha}-1}
\exp{(\lambda^{\frac{1}{\alpha}}t)}\right|\le \frac{m(\alpha,\lambda)}{t^{\alpha+1}};$$
\item [(iii)] If $\lambda\in\{z\in\mathbb{C}:\theta<|\text{arg}(z)|\le\pi\}$ and $t\geq t_0,$ we have
$$|t^{\alpha-1}E_{\alpha,\alpha}(\lambda t^{\alpha})|\le \frac{m(\alpha,\lambda)}{t^{\alpha+1}}.$$
\end{itemize}
\end{lemma}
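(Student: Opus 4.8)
The plan is to read all three estimates off the integral representations of Lemma~\ref{lemma1}, taken on the fixed contour $\gamma(1,\theta)$ (legitimate because the parameter $\varepsilon$ there is arbitrary), reducing everything to one elementary distance estimate. Throughout write $z$ for the argument of the Mittag-Leffler function, so $z=\lambda t^{\alpha}$; since $|\arg(\lambda)|<\pi$ the principal branch gives $z^{1/\alpha}=\lambda^{1/\alpha}t$ and $z^{(1-\alpha)/\alpha}=\lambda^{1/\alpha-1}t^{1-\alpha}$. The condition $t\ge t_{0}$ is exactly $|\lambda|t^{\alpha}(1-\sin\theta_{0})\ge1$, equivalently $|z|>1$ and $|z|-1\ge|z|\sin\theta_{0}$. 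I first claim that, combining this with $\bigl|\,|\arg(\lambda)|-\theta\,\bigr|\ge\theta_{0}$,
\[
|\zeta-z|\ \ge\ |\lambda|\,t^{\alpha}\sin\theta_{0}\qquad\text{for all }\zeta\in\gamma(1,\theta),\ t\ge t_{0}.
\]
On the circular arc of $\gamma(1,\theta)$ this holds because there $|\zeta|=1<|z|$, so $|\zeta-z|\ge|z|-1\ge|z|\sin\theta_{0}$. On each ray $\arg(\zeta)=\pm\theta$: if the foot of the perpendicular from $z$ to the ray lies on the ray, then $|\zeta-z|\ge|z|\sin(\text{angular gap})\ge|z|\sin\theta_{0}$; otherwise the nearest point of the ray is the endpoint $e^{\pm i\theta}$, whence $|\zeta-z|\ge|z|-1\ge|z|\sin\theta_{0}$. (This needs a brief case distinction on the sign and size of $\arg(\lambda)$, but is routine.) Finally, since $|z|>1$ and $|\arg z|=|\arg\lambda|$, one has $z\in G^{+}(1,\theta)$ in cases (i) and (ii) and $z\in G^{-}(1,\theta)$ in case (iii).

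For part (i), apply Lemma~\ref{lemma1}(ii) with $\beta=1$ and $\varepsilon=1$ at the point $z\in G^{+}(1,\theta)$; the boundary term is $\tfrac1\alpha\exp(z^{1/\alpha})=\tfrac1\alpha\exp(\lambda^{1/\alpha}t)$, so the expression on the left of (i) equals $\tfrac{1}{2\alpha\pi i}\int_{\gamma(1,\theta)}\exp(\zeta^{1/\alpha})(\zeta-z)^{-1}d\zeta$. Taking absolute values and using the distance estimate,
\[
\Bigl|E_{\alpha}(\lambda t^{\alpha})-\tfrac1\alpha\exp(\lambda^{1/\alpha}t)\Bigr|\ \le\ \frac{1}{2\alpha\pi\,|\lambda|t^{\alpha}\sin\theta_{0}}\int_{\gamma(1,\theta)}|\exp(\zeta^{1/\alpha})|\,d\zeta\ =\ \frac{1}{t^{\alpha}}\cdot\frac{\int_{\gamma(1,\theta)}|\exp(\zeta^{1/\alpha})|\,d\zeta}{2\alpha\pi|\lambda|\sin\theta_{0}}\ \le\ \frac{m(\alpha,\lambda)}{t^{\alpha}},
\]
the last inequality because $m(\alpha,\lambda)$ is defined as a maximum.

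Parts (ii) and (iii) are the delicate ones: the crude bound above would only give $O(t^{-1})$, and one has to gain the extra factor $|z|^{-1}=(|\lambda|t^{\alpha})^{-1}$. The device is the algebraic identity $(\zeta-z)^{-1}=-z^{-1}+z^{-1}\zeta(\zeta-z)^{-1}$ together with the vanishing of the leading moment
\[
\frac{1}{2\alpha\pi i}\int_{\gamma(1,\theta)}\exp(\zeta^{1/\alpha})\,\zeta^{1/\alpha-1}\,d\zeta\ =\ 0,
\]
which is Lemma~\ref{lemma1}(i) with $\beta=0$, evaluated at $z=0$ (note $E_{\alpha,0}(0)=1/\Gamma(0)=0$). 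For part (ii), Lemma~\ref{lemma1}(ii) with $\beta=\alpha$, $\varepsilon=1$ at $z\in G^{+}(1,\theta)$, multiplied by $t^{\alpha-1}$ and using $z^{(1-\alpha)/\alpha}=\lambda^{1/\alpha-1}t^{1-\alpha}$, gives after discarding the $-z^{-1}$ term by the vanishing moment
\[
t^{\alpha-1}E_{\alpha,\alpha}(\lambda t^{\alpha})-\tfrac1\alpha\lambda^{1/\alpha-1}\exp(\lambda^{1/\alpha}t)\ =\ \frac{t^{\alpha-1}}{z}\cdot\frac{1}{2\alpha\pi i}\int_{\gamma(1,\theta)}\frac{\exp(\zeta^{1/\alpha})\,\zeta^{1/\alpha}}{\zeta-z}\,d\zeta .
\]
Estimating as in part (i), with $|z|=|\lambda|t^{\alpha}$,
\[
\Bigl|t^{\alpha-1}E_{\alpha,\alpha}(\lambda t^{\alpha})-\tfrac1\alpha\lambda^{1/\alpha-1}\exp(\lambda^{1/\alpha}t)\Bigr|\ \le\ \frac{t^{\alpha-1}}{|\lambda|t^{\alpha}}\cdot\frac{1}{2\alpha\pi|\lambda|t^{\alpha}\sin\theta_{0}}\int_{\gamma(1,\theta)}|\exp(\zeta^{1/\alpha})\zeta^{1/\alpha}|\,d\zeta\ =\ \frac{1}{t^{\alpha+1}}\cdot\frac{\int_{\gamma(1,\theta)}|\exp(\zeta^{1/\alpha})\zeta^{1/\alpha}|\,d\zeta}{2\alpha\pi|\lambda|^{2}\sin\theta_{0}}\ \le\ \frac{m(\alpha,\lambda)}{t^{\alpha+1}} .
\]
Part (iii) is the same computation run through Lemma~\ref{lemma1}(i) instead of (ii): since $z\in G^{-}(1,\theta)$ there is no boundary term, so $t^{\alpha-1}E_{\alpha,\alpha}(\lambda t^{\alpha})$ itself equals $\tfrac{t^{\alpha-1}}{z}\cdot\tfrac{1}{2\alpha\pi i}\int_{\gamma(1,\theta)}\exp(\zeta^{1/\alpha})\zeta^{1/\alpha}(\zeta-z)^{-1}d\zeta$ after the same splitting, and the identical two-line estimate yields $|t^{\alpha-1}E_{\alpha,\alpha}(\lambda t^{\alpha})|\le m(\alpha,\lambda)/t^{\alpha+1}$. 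All of these contour integrals converge absolutely because $|\exp(\zeta^{1/\alpha})|$ decays exponentially along the rays, where $\cos(\theta/\alpha)<0$.

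The main obstacle is the cancellation underlying parts (ii) and (iii): the $O(t^{-\alpha})$ contribution that the naive estimate produces for $E_{\alpha,\alpha}$ is in fact absent, because $1/\Gamma(0)=0$, and this must be built into the Hankel-type integral itself via the splitting of $(\zeta-z)^{-1}$ and the identity $\int_{\gamma(1,\theta)}\exp(\zeta^{1/\alpha})\zeta^{1/\alpha-1}d\zeta=0$. Once that is noticed, the whole proof rests on the single distance inequality above and routine bookkeeping with the definition of $m(\alpha,\lambda)$.
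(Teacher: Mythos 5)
Your proof is correct and follows essentially the same route as the paper's: the $\gamma(1,\theta)$ representations from Lemma~\ref{lemma1}, the distance bound $|\zeta-z|\ge|z|\sin\theta_0$ for $t\ge t_0$, and the splitting $\frac{1}{\zeta-z}=-\frac1z+\frac{\zeta}{z(\zeta-z)}$ to gain the extra power of $|z|$ in (ii) and (iii). The only difference is presentational: you make explicit the vanishing-moment identity $\int_{\gamma(1,\theta)}\exp(\zeta^{1/\alpha})\zeta^{1/\alpha-1}\,d\zeta=0$ (via $1/\Gamma(0)=0$) and the geometric case analysis behind the distance estimate, both of which the paper uses implicitly or states without proof.
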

\begin{proof}
\noindent (i) Let $\varepsilon=1$, from Lemma \ref{lemma1}(ii) we have
$$E_{\alpha}(z)=\frac{1}{\alpha}\exp{(z^{\frac{1}{\alpha}})}+\frac{1}{2\alpha\pi i}\int_{\gamma(1,\theta)}\frac{\exp{(\zeta^{\frac{1}{\alpha}})}}{\zeta-z}d\zeta,\,\, z\in G^{+}(1,\theta).$$
On the other hand, for $z\in G^{+}(1,\theta-\theta_0)$ and $\zeta\in \gamma(1,\theta)$, we have
$$|\zeta-z|\ge\min\{|z|\sin\theta_0,|z|-1\}.$$
Hence, for $z\in G^{+}(1,\theta-\theta_0)$ and $|z|\ge\frac{1}{1-\sin\theta_0}$, we obtain the estimate
$$\left|E_\alpha(z)-\frac{1}{\alpha}\exp{(z^{\frac{1}{\alpha}})}\right|\le \frac{\int_{\gamma(1,\theta)}|\exp{(\zeta^{\frac{1}{\alpha}})}|d\zeta}{2\pi\alpha\sin\theta_0}\frac{1}{|z|}.$$
Substituting $z=\lambda t^{\alpha}$, where $\lambda\in\{z\in\C \setminus\{0\}:|\text{arg(z)}| \le \theta-\theta_0\}$ and $t\ge t_0$, we have
$$|E_\alpha(\lambda t^{\alpha})-\frac{1}{\alpha}\exp{(\lambda^{\frac{1}{\alpha}}t)}|\le \frac{\int_{\gamma(1,\theta)}|\exp{(\zeta^{\frac{1}{\alpha}})}|d\zeta}{2\alpha\pi |\lambda|\sin\theta_0}\frac{1}{t^{\alpha}}.$$
\noindent (ii) By virtue of Lemma \ref{lemma1}(ii), we have
$$E_{\alpha,\alpha}(z)=\frac{1}{\alpha}z^{\frac{1-\alpha}{\alpha}}\exp{(z^{\frac{1}{\alpha}})}+\frac{1}{2\alpha\pi i}\int_{\gamma(1,\theta)}\frac{\exp{(\zeta^{\frac{1}{\alpha}})}\zeta^{\frac{1-\alpha}{\alpha}}}{\zeta-z}d\zeta,\,\, z\in G^{+}(1,\theta).$$
Using the identity $\frac{1}{\zeta-z}=-\dfrac{1}{z}+\frac{\zeta}{z(\zeta-z)}$ yields
$$E_{\alpha,\alpha}(z)=\frac{1}{\alpha}z^{\frac{1-\alpha}{\alpha}}\exp{(z^{\frac{1}{\alpha}})}+\frac{1}{2\alpha\pi i}\int_{\gamma(1,\theta)}\frac{\exp{(\zeta^{1/\alpha})}\zeta^{\frac{1}{\alpha}}}{\zeta-z}d\zeta,\,\, z\in G^{+}(1,\theta).$$
This implies
\begin{equation}\label{Eq_exam}
\left|E_{\alpha,\alpha}(z)-\frac{1}{\alpha}z^{\frac{1-\alpha}{\alpha}}\exp{(z^{\frac{1}{\alpha}})}\right|\le \frac{\int_{\gamma(1,\theta)}|\exp{(\zeta^{\frac{1}{\alpha}})}\zeta^{\frac{1}{\alpha}}|d\zeta}{2\alpha \pi \sin\theta_0}\frac{1}{|z|^2},
\end{equation}
where $z\in G^{+}(1,\theta-\theta_0)$ and $|z|\ge\frac{1}{1-\sin\theta_0}.$
Let $z=\lambda t^{\alpha}$ with $\lambda\in\{z\in\C\setminus\{0\}:|\text{arg(z)}|\le \theta-\theta_0\}$ and $t\ge t_0,$ we have
$$\left|t^{\alpha-1}E_{\alpha,\alpha}(\lambda t^\alpha)-\frac{1}{\alpha}\lambda^{\frac{1-\alpha}{\alpha}}\exp{(\lambda^{\frac{1}{\alpha}}t)}\right|\le \frac{\int_{\gamma(1,\theta)}|\exp{(\zeta^{\frac{1}{\alpha}})}\zeta^{\frac{1}{\alpha}}|d\zeta}{2\pi\alpha |\lambda|^2\sin\theta_0}\frac{1}{t^{\alpha+1}}.$$
\noindent (iii) According to Lemma \ref{lemma1}(i), we have
 $$E_{\alpha,\alpha}(z)=\frac{1}{2\alpha\pi i}\int_{\gamma(1,\theta)}\frac{\exp{(\zeta^{\frac{1}{\alpha}})}\zeta^{\frac{1-\alpha}{\alpha}}}{\zeta-z}d\zeta,\,\, z\in G^{-}(1,\theta+\theta_0).$$
Using the identity $\frac{1}{\zeta-z}=-\frac{1}{z}+\frac{\zeta}{z(\zeta-z)}$ yields
$$E_{\alpha,\alpha}(z)=\frac{1}{2\alpha\pi i}\int_{\gamma(1,\theta)}\frac{\exp{(\zeta^{\frac{1}{\alpha}})}\zeta^{\frac{1}{\alpha}}}{\zeta-z}d\zeta,\,\, z\in G^{-}(1,\theta+\theta_0).$$
$$|E_{\alpha,\alpha}(z)|\le \frac{\int_{\gamma(1,\theta)}|\exp{(\zeta^{\frac{1}{\alpha}})}\zeta^{\frac{1}{\alpha}}|d\zeta}{2\alpha \pi \sin\theta_0}\frac{1}{|z|^2},$$
for $z\in G^{-}(1,\theta+\theta_0)$ and $|z|\ge\frac{1}{1-\sin\theta_0}.$
Substituting $z=\lambda t^{\alpha}$, we have
$$|t^{\alpha-1}E_{\alpha,\alpha}(\lambda t^{\alpha})|\le \frac{\int_{\gamma(1,\theta)}|\exp{(\zeta^{\frac{1}{\alpha}})}\zeta^{\frac{1}{\alpha}}|d\zeta}{2\pi\alpha |\lambda|^2\sin\theta_0}\frac{1}{t^{\alpha+1}},$$
where $\lambda\in\{z\in\C\setminus\{0\}:\theta+\theta_0\le|\text{arg(z)}|\le\pi\}$ and $t\ge t_0.$ The proof is complete.
\end{proof}
\begin{lemma}\label{LimitLemma}
For any function  $g\in C_\infty(\R_{\geq 0};\R)$ and $\lambda\in\{z\in\C\setminus\{0\}:|\text{arg(z)}|<\frac{\pi\alpha}{2}\}$, the following limit holds
\begin{align}\label{Eq4}
\lim_{u\to\infty}&\int_0^u
\notag (u-s)^{\alpha-1}\frac{E_{\alpha,\alpha}(\lambda(u-s)^\alpha)}{E_\alpha(\lambda u^\alpha)}g(s)\;ds\\
&=\lambda^{\frac{1}{\alpha}-1}\int_0^\infty\exp(-\lambda^{\frac{1}{\alpha}}s)g(s)\;ds.
\end{align}
\end{lemma}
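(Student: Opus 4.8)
The plan is to pass to the limit under the integral sign by dominated convergence, after first isolating a short window near $s=u$ where the asymptotics of Lemma~\ref{lemma3} are unavailable. Throughout I set $\mu:=\lambda^{\frac1\alpha}$; since $|\arg(\lambda)|<\frac{\pi\alpha}{2}$ we have $|\arg(\mu)|<\frac{\pi}{2}$, hence $a:=\operatorname{Re}(\mu)>0$ and $|\exp(\mu r)|=e^{ar}$ for real $r$. First I would fix $\theta\in(\frac{\pi\alpha}{2},\pi\alpha)$ and then $\theta_0\in(0,\theta-\frac{\pi\alpha}{2})$ small enough that $|\theta-|\arg(\lambda)||\ge\theta_0$, so that parts (i) and (ii) of Lemma~\ref{lemma3} apply to this $\lambda$; denote by $t_0$ and $m=m(\alpha,\lambda)$ the constants they furnish, put $M:=\sup_{s\ge0}|g(s)|<\infty$, and abbreviate $\Phi(r):=r^{\alpha-1}E_{\alpha,\alpha}(\lambda r^\alpha)$ for $r>0$.

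Next I would record the three estimates that drive the proof. From Lemma~\ref{lemma3}(i), $E_\alpha(\lambda u^\alpha)=\frac1\alpha\exp(\mu u)+\rho_2(u)$ with $|\rho_2(u)|\le m\,u^{-\alpha}$ for $u\ge t_0$; since $e^{au}\to\infty$, there is a threshold $u_1\ge t_0$ with $|E_\alpha(\lambda u^\alpha)|\ge\frac{1}{2\alpha}e^{au}$ for all $u\ge u_1$. From Lemma~\ref{lemma3}(ii), $\Phi(r)=\frac1\alpha\lambda^{\frac1\alpha-1}\exp(\mu r)+\rho_1(r)$ with $|\rho_1(r)|\le m\,r^{-\alpha-1}$ for $r\ge t_0$, and absorbing the polynomially small remainder into the exponential (using $e^{ar}\ge e^{at_0}$ on $[t_0,\infty)$) gives a constant $C_1$ with $|\Phi(r)|\le C_1 e^{ar}$ for all $r\ge t_0$. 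Finally, since $E_{\alpha,\alpha}$ is entire, $r\mapsto E_{\alpha,\alpha}(\lambda r^\alpha)$ is bounded on $[0,t_0]$, so $|\Phi(r)|\le C_2\,r^{\alpha-1}$ on $(0,t_0]$ for some $C_2$, and in particular $\int_0^{t_0}|\Phi(r)|\,dr<\infty$ because $\alpha>0$.

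The argument then splits at $s=u-t_0$ (for $u>t_0$). For the window $[u-t_0,u]$, the substitution $r=u-s$ together with the lower bound on the denominator gives, for $u\ge u_1$,
\[
\left|\int_{u-t_0}^{u}(u-s)^{\alpha-1}\frac{E_{\alpha,\alpha}(\lambda(u-s)^\alpha)}{E_\alpha(\lambda u^\alpha)}\,g(s)\,ds\right|\le\frac{M}{|E_\alpha(\lambda u^\alpha)|}\int_0^{t_0}|\Phi(r)|\,dr\le 2\alpha M\,e^{-au}\int_0^{t_0}|\Phi(r)|\,dr,
\]
which tends to $0$ as $u\to\infty$. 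For the main part $[0,u-t_0]$, let $G_u$ denote the integrand $(u-s)^{\alpha-1}\frac{E_{\alpha,\alpha}(\lambda(u-s)^\alpha)}{E_\alpha(\lambda u^\alpha)}g(s)$ restricted to $[0,u-t_0]$ and extended by $0$, so that $\int_0^\infty G_u(s)\,ds$ equals the integral over $[0,u-t_0]$. When $s\le u-t_0$ we have $u-s\ge t_0$, so the estimates above give $|G_u(s)|\le\frac{C_1 e^{a(u-s)}}{\frac{1}{2\alpha}e^{au}}\,M=2\alpha C_1 M\,e^{-as}$ for every $u\ge u_1$, an integrable majorant independent of $u$. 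For each fixed $s$, dividing numerator and denominator of $\Phi(u-s)/E_\alpha(\lambda u^\alpha)$ by $\frac1\alpha\exp(\mu u)$ and invoking the bounds on $\rho_1,\rho_2$ shows $G_u(s)\to\lambda^{\frac1\alpha-1}e^{-\mu s}g(s)$ as $u\to\infty$. Dominated convergence then yields $\int_0^\infty G_u(s)\,ds\to\lambda^{\frac1\alpha-1}\int_0^\infty e^{-\mu s}g(s)\,ds$, and combining this with the vanishing of the window contribution proves \eqref{Eq4}.

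The step I expect to be the real obstacle is the treatment of the window near $s=u$: there the domain of integration is drifting off to infinity with $u$ while the factor $(u-s)^{\alpha-1}$ is singular and the asymptotic expansions of Lemma~\ref{lemma3} break down, so no single integrable function dominates the full integrand on $[0,u]$ uniformly in $u$. The point that resolves it is that $1/E_\alpha(\lambda u^\alpha)$ is exponentially small ($\lesssim e^{-au}$), which more than absorbs the merely \emph{integrable} singularity $\int_0^{t_0}r^{\alpha-1}\,dr<\infty$; once the pointwise limit of the integrand has been extracted from Lemma~\ref{lemma3}, the remainder is a routine application of the dominated convergence theorem.
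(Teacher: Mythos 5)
Your proof is correct and follows exactly the route the paper intends: the paper omits the argument, saying only that it "follows by using Lemma~\ref{lemma3} and arguments analogous to Lemma 8 in \cite{Cong}", and your write-up supplies precisely those details (the split at $s=u-t_0$, the exponential lower bound on $E_\alpha(\lambda u^\alpha)$ absorbing the integrable singularity of $r^{\alpha-1}E_{\alpha,\alpha}(\lambda r^\alpha)$ near $r=0$, and dominated convergence on the main piece). The only implicit assumption is the boundedness of $g$, which is guaranteed by the definition of $C_\infty(\R_{\geq 0};\R)$ used in \cite{Cong}.
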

\begin{proof}
The proof of this lemma follows imediately by using Lemma \ref{lemma3} and arguments analogous that used for the proof of Lemma 8 in \cite{Cong}. Hence, we omit it.
\end{proof}
\begin{lemma}\label{thm12}
Let $\lambda \in \C\setminus\{0\}$ with $\frac{\alpha \pi }{2}< |\text{arg}(\lambda)|\leq \pi$ and $l\in \N$. Then, there exists positive constant $M_l(\alpha,\lambda), \hat{M}_l(\alpha,\lambda)$ and a positive real number $t_0$ such that the following statements hold
\begin{itemize}
\item [(i)] $|\dfrac{d^l}{d\lambda^l} E_{\alpha}(\lambda t^{\alpha})| \le \frac{M_l(\alpha,\lambda)}{t^{\alpha}}\quad  \hbox{for any } t>t_0,$
\item [(ii)] $|\dfrac{d^l}{d\lambda^l} E_{\alpha,\alpha}(\lambda t^{\alpha})| \le \frac{\hat {M}_l(\alpha,\lambda)}{t^{2\alpha}}\quad  \hbox{for any } t>t_0.$
\end{itemize}
\end{lemma}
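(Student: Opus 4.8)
The plan is to differentiate the contour representation of Lemma~\ref{lemma1}(i) under the integral sign and then to estimate exactly along the lines of the proof of Lemma~\ref{lemma3}(iii).

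First I would fix the geometry. Choose $\theta$ with $\frac{\alpha\pi}{2}<\theta<\min\{|\text{arg}(\lambda)|,\alpha\pi\}$ and $\theta_0$ with $0<\theta_0<\min\{\theta-\frac{\alpha\pi}{2},\,|\text{arg}(\lambda)|-\theta\}$ (both intervals are non-empty precisely because $|\text{arg}(\lambda)|>\frac{\alpha\pi}{2}$), put $\varepsilon=1$ and $t_0=\bigl(|\lambda|(1-\sin\theta_0)\bigr)^{-1/\alpha}$ as in Lemma~\ref{lemma3}. For every $t>0$ one has $\text{arg}(\lambda t^\alpha)=\text{arg}(\lambda)$, hence $\lambda t^\alpha\in G^{-}(1,\theta+\theta_0)\subset G^{-}(1,\theta)$, and Lemma~\ref{lemma1}(i) gives
$$E_{\alpha}(\lambda t^\alpha)=\frac{1}{2\alpha\pi i}\int_{\gamma(1,\theta)}\frac{\exp(\zeta^{1/\alpha})}{\zeta-\lambda t^\alpha}\,d\zeta,\qquad E_{\alpha,\alpha}(\lambda t^\alpha)=\frac{1}{2\alpha\pi i}\int_{\gamma(1,\theta)}\frac{\exp(\zeta^{1/\alpha})\zeta^{(1-\alpha)/\alpha}}{\zeta-\lambda t^\alpha}\,d\zeta.$$
Exactly as in the proof of Lemma~\ref{lemma3}, for $\zeta\in\gamma(1,\theta)$ and $t\ge t_0$ one has $|\zeta-\lambda t^\alpha|\ge|\lambda|t^\alpha\sin\theta_0$; and since $\theta/\alpha>\pi/2$ forces $\exp(\zeta^{1/\alpha})$ to decay exponentially along the two rays of $\gamma(1,\theta)$, the constants $A_0:=\int_{\gamma(1,\theta)}|\exp(\zeta^{1/\alpha})|\,|d\zeta|$ and $A_1:=\int_{\gamma(1,\theta)}|\exp(\zeta^{1/\alpha})\zeta^{1/\alpha}|\,|d\zeta|$ are finite.

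For (i), the integrand depends holomorphically on $\lambda$ on the open sector $|\text{arg}(\lambda)|>\theta$ and, for fixed $t$ and $\lambda$ in a compact subset of that sector, is dominated by a fixed integrable function of $\zeta$; hence I may differentiate $l$ times under the integral sign, which produces the factor $l!\,t^{\alpha l}$ and replaces $(\zeta-\lambda t^\alpha)^{-1}$ by $(\zeta-\lambda t^\alpha)^{-(l+1)}$. For $t\ge t_0$ the lower bound $|\zeta-\lambda t^\alpha|\ge|\lambda|t^\alpha\sin\theta_0$ then contributes $t^{-\alpha(l+1)}$, the two powers of $t$ combine to $t^{-\alpha}$, and collecting constants yields (i) with $M_l(\alpha,\lambda)=\dfrac{l!\,A_0}{2\alpha\pi(|\lambda|\sin\theta_0)^{l+1}}$.

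Part (ii) is the only step that needs an idea, since the same computation applied to the $E_{\alpha,\alpha}$-representation above only yields $O(t^{-\alpha})$, not $O(t^{-2\alpha})$. To gain the extra power I would first rewrite it as in the proof of Lemma~\ref{lemma3}(iii): the identity $\frac{1}{\zeta-z}=-\frac1z+\frac{\zeta}{z(\zeta-z)}$ together with $\int_{\gamma(1,\theta)}\exp(\zeta^{1/\alpha})\zeta^{(1-\alpha)/\alpha}\,d\zeta=0$ — the point at which $\beta=\alpha$ is essential, the vanishing being Hankel's formula $1/\Gamma(0)=0$ after the substitution $\zeta=\xi^{\alpha}$ — turns it into
$$E_{\alpha,\alpha}(z)=\frac{1}{2\alpha\pi i\,z}\int_{\gamma(1,\theta)}\frac{\exp(\zeta^{1/\alpha})\zeta^{1/\alpha}}{\zeta-z}\,d\zeta=:\frac1z\,H(z),\qquad z\in G^{-}(1,\theta).$$
Then $\dfrac{d^l}{d\lambda^l}E_{\alpha,\alpha}(\lambda t^\alpha)=\dfrac{d^l}{d\lambda^l}\bigl[(\lambda t^\alpha)^{-1}H(\lambda t^\alpha)\bigr]$, and I expand by the Leibniz rule. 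Each factor $\dfrac{d^j}{d\lambda^j}(\lambda t^\alpha)^{-1}=(-1)^j j!\,\lambda^{-(j+1)}t^{-\alpha}$ is $O(t^{-\alpha})$, and each factor $\dfrac{d^k}{d\lambda^k}H(\lambda t^\alpha)$ is $O(t^{-\alpha})$ by the argument of part~(i) applied to $H$ in place of $E_{\alpha}$ (differentiate under the integral, then use $|\zeta-\lambda t^\alpha|\ge|\lambda|t^\alpha\sin\theta_0$ and $A_1<\infty$). The product of two such factors is $O(t^{-2\alpha})$, and summing the $l+1$ Leibniz terms gives (ii) with an explicit $\hat M_l(\alpha,\lambda)$ assembled from $A_1$, $l!$, $|\lambda|$ and $\sin\theta_0$. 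In short, the genuine content of the proof is obtaining this extra $t^{-\alpha}$ in (ii) honestly — justifying $\int_{\gamma(1,\theta)}\exp(\zeta^{1/\alpha})\zeta^{(1-\alpha)/\alpha}\,d\zeta=0$ and then keeping the Leibniz bookkeeping straight — while the differentiation under the integral sign, the distance estimate, and the remaining arithmetic are routine.
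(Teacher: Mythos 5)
Your proposal is correct and follows essentially the same route as the paper's proof: the same contour representation and distance estimate $|\zeta-\lambda t^\alpha|\ge|\lambda|t^\alpha\sin\theta_0$, differentiation under the integral for (i), and for (ii) the same key step of combining the identity $\frac{1}{\zeta-z}=-\frac{1}{z}+\frac{\zeta}{z(\zeta-z)}$ with the Hankel-formula vanishing $\int_{\gamma(1,\theta)}\exp(\zeta^{1/\alpha})\zeta^{(1-\alpha)/\alpha}\,d\zeta=1/\Gamma(0)=0$, followed by the Leibniz rule. Your bookkeeping of the Leibniz terms as products of two $O(t^{-\alpha})$ factors reproduces exactly the paper's estimate and constant.
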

\begin{proof}
First, we note that since $\lambda$ satisfies by assumption the inequalities
 $$ 
\frac{\alpha \pi}{2}< |\arg{(\lambda)}| < \pi,
$$ 
we can choose and fix $\frac{\alpha \pi}{2} < \theta < \alpha\pi$ and $0<\theta_0<\frac{\pi}{2}$ such that $|\arg{(\lambda)}|>\theta +\theta_0$. This implies $\lambda t^\alpha \in G^{-}(1, \theta +\theta_0)$ for any $t>0$. Moreover, by direct computation, we see 
\begin{equation}\label{InE}
\inf_{\zeta \in \gamma(1,\theta)}|\zeta-\lambda t^\alpha|\ge |\lambda|t^\alpha \sin \theta_0,
\end{equation}
for any $t>t_0:=\dfrac{1}{|\lambda|^{\frac{1}{\alpha}}(1-\sin\theta_0)^{\frac{1}{\alpha}}}$.

\noindent (i) With $\theta$ defined as above, for any $t>0$, since $\lambda t^\alpha\in G^-(1,\theta)$ using Lemma~\ref{lemma1} we have
\[
E_{\alpha}(\lambda t^\alpha)=\dfrac{1}{2\alpha\pi i}\int_{\gamma(1,\theta)}\frac{\exp{(\zeta^{\frac{1}{\alpha}})}}{\zeta-\lambda t^\alpha}d\zeta.
\]
Hence
\begin{align*}
\frac{d^l}{d\lambda^l}E_{\alpha}(\lambda t^\alpha)&=\dfrac{1}{2\alpha\pi i}\int_{\gamma(1,\theta)}\exp{(\zeta^{\frac{1}{\alpha}})}\frac{d^l}{d\lambda^l}(\zeta-\lambda t^\alpha)^{-1}d\zeta\\
&=\frac{t^{\alpha l}l!}{2\alpha \pi i}\int_{\gamma(1,\theta)}\frac{\exp{(\zeta^{\frac{1}{\alpha}})}}{(\zeta-\lambda t^\alpha)^{l+1}}d\zeta,
\end{align*}
which together with \eqref{InE} implies
\begin{align}\label{InE1}
|\frac{d^l}{d\lambda^l}E_{\alpha}(\lambda t^\alpha)|\le \frac{l!}{2\alpha \pi (|\lambda|\sin \theta_0)^{l+1}}\times \frac{1}{t^\alpha}\int_{\gamma(1,\theta)}|\exp{(\zeta^{\frac{1}{\alpha}})}|d\zeta,
\end{align}
provided that $t >  t_0$.
Note that the integral in the right-hand side of \eqref{InE1} converges because for $\zeta$ such that $\arg(\zeta)=\pm \theta$ and $|\zeta|\geq 1$ we have
$$
\left| \exp(\zeta^\frac{1}{\alpha})\right| = \exp\left(|\zeta^\frac{1}{\alpha}|\cos\Big(\frac{\theta}{\alpha}\Big)\right),
$$
and $\cos\Big(\frac{\theta}{\alpha}\Big) < 0$ since $\frac{\pi}{2} < \frac{\theta}{\alpha} < \pi$.
 Put 
\[
M_l(\alpha,\lambda)=\frac{l!}{2\alpha \pi (|\lambda|\sin \theta_0)^{l+1}}\int_{\gamma(\varepsilon,\theta)}|\exp{(\zeta^{\frac{1}{\alpha}})}|d\zeta.
\]
This together with \eqref{InE1} completes the proof of this part.

\noindent (ii) By Lemma~\ref{lemma1}, for any $t>0$ we have
\begin{equation}\label{IntEq1}
E_{\alpha,\alpha}(\lambda t^\alpha)=\dfrac{1}{2\alpha\pi i}\int_{\gamma(1,\theta)}\frac{\exp{(\zeta^{\frac{1}{\alpha}})}\zeta^{\frac{1-\alpha}{\alpha}}}{\zeta-\lambda t^\alpha}d\zeta. 
\end{equation}
Substituting the representation 
\[
\frac{1}{\zeta-\lambda t^\alpha}=-\frac{1}{\lambda t^\alpha}+\frac{\zeta}{\lambda t^\alpha (\zeta-\lambda t^ \alpha)}
\]
into \eqref{IntEq1} we get
\[
E_{\alpha,\alpha}(\lambda t^\alpha)=-\frac{1}{\lambda t^\alpha}\dfrac{1}{2\alpha\pi i}\int_{\gamma(1,\theta)}\exp{(\zeta^{\frac{1}{\alpha}})}\zeta^{\frac{1-\alpha}{\alpha}}d\zeta+\dfrac{1}{2\alpha\pi i}\frac{1}{\lambda t^\alpha}\int_{\gamma(1,\theta)}\frac{\exp{(\zeta^{\frac{1}{\alpha}})}\zeta^{\frac{1}{\alpha}}}{\zeta-\lambda t^\alpha}d\zeta.
\]
Note that
\[
\dfrac{1}{2\alpha\pi i}\int_{\gamma(1,\theta)}\exp{(\zeta^{\frac{1}{\alpha}})}\zeta^{\frac{1-\alpha}{\alpha}}d\zeta={\frac{1}{\Gamma(z)}}\Big|_{z=0}=0,
\]
see Podlubny~\cite[Formula (1.52), p.~16]{Podlubny}.
Hence
\[
E_{\alpha,\alpha}(\lambda t^\alpha)=\dfrac{1}{2\alpha\pi i}\frac{1}{\lambda t^\alpha}\int_{\gamma(1,\theta)}\frac{\exp{(\zeta^{\frac{1}{\alpha}})}\zeta^{\frac{1}{\alpha}}}{\zeta-\lambda t^\alpha}d\zeta,
\]
for all $t>0$. Using the Leibniz rule for the $l$th derivative of a product of two factors, we obtain
\begin{align*}
\frac{d^l}{d\lambda^l}E_{\alpha,\alpha}(\lambda t^\alpha)=\frac{1}{2\alpha \pi i t^\alpha}\sum_{k=0}^l C_l^k \frac{(-1)^{l-k}(l-k)!}{\lambda^{l-k+1}}\int_{\gamma(1,\theta)}
\frac{\exp{(\zeta^{\frac{1}{\alpha}})}\zeta^{\frac{1}{\alpha}}k!t^{\alpha k}}{(\zeta-
\lambda t^\alpha)^{k+1}}d\zeta,
\end{align*}
which again together with \eqref{InE} yields
$$
|\frac{d^l}{d\lambda^l}E_{\alpha,\alpha}(\lambda t^\alpha)|  \le \frac{1}{2\alpha\pi |\lambda|^{l+2}}\sum_{k=0}^l C_l^k \frac{(l-k)!k!}{(\sin \theta_0)^{k+1}}
\times \frac{1}{t^{2\alpha}}
\int_{\gamma(1,\theta)}|
\exp{(\zeta^{\frac{1}{\alpha}})}\zeta^{\frac{1}{\alpha}}|d\zeta,
$$
provided that $t >  t_0$. Noting again that the integral in the above formula converges.
Setting
$$
\hat{M}_l(\alpha,\lambda) := \frac{1}{2\alpha\pi |\lambda|^{l+2}}\sum_{k=0}^l C_l^k \frac{(l-k)!k!}{(\sin \theta_0)^{k+1}}\int_{\gamma(1,\theta)}|
\exp{(\zeta^{\frac{1}{\alpha}})}\zeta^{\frac{1}{\alpha}}|d\zeta
$$
we complete the proof.
\end{proof}
\begin{lemma}\label{thm.spec.estimate}
Let $A\in \R^{d\times d}$. Assume that the spectrum of $A$ satisfies the relation 
$$
\sigma(A)\subset\left\{\lambda\in \C\setminus\{0\}:|\arg(\lambda)|>\frac{\alpha \pi}{2}\right\}. 
$$
Then, the following statements hold:
\begin{itemize}
\item [(i)]  $\lim_{t\to \infty}\|E_\alpha(t^\alpha A)\|=0;$
\item [(ii)]  $\int_0^\infty \tau^{\alpha -1}\| E_{\alpha,\alpha}(\tau^\alpha A)\| \,d\tau <\infty.$
\end{itemize}
\end{lemma}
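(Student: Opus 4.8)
The plan is to reduce both matrix estimates to the scalar derivative bounds of Lemma~\ref{thm12} by passing to the Jordan canonical form of $A$. Write $A=PJP^{-1}$ with $J=\bigoplus_{i=1}^{r}J_i$ block diagonal, each Jordan block being $J_i=\lambda_i I+N_i$ of size $m_i$, where $\lambda_i\in\sigma(A)$ and $N_i$ is the nilpotent shift with $N_i^{m_i}=0$. Since $E_\alpha$ and $E_{\alpha,\alpha}$ are entire, for any entire $g$ the matrix function satisfies $g(A)=P\bigl(\bigoplus_i g(J_i)\bigr)P^{-1}$, and hence, for any submultiplicative matrix norm,
$$
\|g(A)\|\le \|P\|\,\|P^{-1}\|\,\max_{1\le i\le r}\|g(J_i)\|.
$$
So it is enough to bound $\|g(J_i)\|$ for each block, with $g(z)=E_\alpha(t^\alpha z)$ in part (i) and $g(z)=E_{\alpha,\alpha}(\tau^\alpha z)$ in part (ii).

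For a single block the functional calculus gives $g(J_i)=\sum_{k=0}^{m_i-1}\frac{1}{k!}g^{(k)}(\lambda_i)N_i^{\,k}$, where $g^{(k)}(\lambda_i)$ is precisely $\frac{d^k}{d\lambda^k}E_\alpha(\lambda t^\alpha)$ (resp.\ $\frac{d^k}{d\lambda^k}E_{\alpha,\alpha}(\lambda\tau^\alpha)$) evaluated at $\lambda=\lambda_i$. Since $m_i$ is finite and $\|N_i^k\|$ is a fixed constant, $\|g(J_i)\|$ is dominated by a finite linear combination of these derivative moduli. Now every $\lambda_i$ satisfies $\frac{\alpha\pi}{2}<|\arg(\lambda_i)|\le\pi$, so Lemma~\ref{thm12} applies with $l=0,1,\dots,m_i-1$, producing constants $M_l(\alpha,\lambda_i),\hat M_l(\alpha,\lambda_i)$ and a threshold $t_0(\lambda_i)$. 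Taking $t_0:=\max_{1\le i\le r}t_0(\lambda_i)$ (a finite maximum, as $\sigma(A)$ is finite) and collecting all constants through the two displays above, we obtain numbers $C(\alpha,A),\hat C(\alpha,A)$ with
$$
\|E_\alpha(t^\alpha A)\|\le \frac{C(\alpha,A)}{t^\alpha},\qquad \|E_{\alpha,\alpha}(t^\alpha A)\|\le \frac{\hat C(\alpha,A)}{t^{2\alpha}}\qquad\text{for all }t>t_0.
$$

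The first bound gives $\lim_{t\to\infty}\|E_\alpha(t^\alpha A)\|=0$, which is (i). For (ii) I split $\int_0^\infty=\int_0^{t_0}+\int_{t_0}^\infty$. On $[t_0,\infty)$ the integrand is at most $\hat C(\alpha,A)\,\tau^{\alpha-1-2\alpha}=\hat C(\alpha,A)\,\tau^{-\alpha-1}$, which is integrable at infinity since $\alpha+1>1$. On $[0,t_0]$ the map $\tau\mapsto E_{\alpha,\alpha}(\tau^\alpha A)$ is continuous, hence bounded, while $\int_0^{t_0}\tau^{\alpha-1}\,d\tau<\infty$ because $\alpha>0$; so this piece is finite as well. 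Summing the two contributions proves (ii).

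There is no deep difficulty once Lemma~\ref{thm12} is available; the only points requiring care are: (a)~the passage from scalars to matrices via the Jordan decomposition, noting that both the constants and the threshold $t_0$ delivered by Lemma~\ref{thm12} depend on the individual eigenvalue, so one must maximize over the finite spectrum (and invoke equivalence of matrix norms so that the conclusion is norm-independent); and (b)~in part (ii), the separate treatment of the integrable singularity of $\tau^{\alpha-1}$ at $\tau=0$, which the decay estimate alone does not control.
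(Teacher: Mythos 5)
Your proposal is correct and follows essentially the same route as the paper: reduction to Jordan blocks, identification of the block entries with the $\lambda$-derivatives $\frac{d^l}{d\lambda^l}E_{\alpha,\beta}(\lambda t^\alpha)$, application of Lemma~\ref{thm12}, and a split of the integral at $t_0$ in part (ii). Your explicit remarks about maximizing the eigenvalue-dependent thresholds $t_0(\lambda_i)$ and constants over the finite spectrum are a welcome clarification of a point the paper glosses over, but they do not change the argument.
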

\begin{proof}
Denote by $a_1,\dots, a_r$ the eigenvalues of the matrix $A$.  From linear algebra we know that there exists a nonsingular matrix $T\in\C^{d \times d}$ transforming $A$ into the Jordan normal form $B$, i.e.
\[
B=T^{-1}A T=\hbox{diag}(B_1,\dots, B_s),
\]
where the Jordan block $B_i$, $1\le i\le s$, is of the following form
\[
    \left(
      \begin{array}{*5{c}}
       \lambda_i    &     1         &    0      &  \cdots       &  0        \\
                 0                       & \lambda_i     &    1      &     \cdots    &      0     \\
                 \vdots                      &      \cdots         &\ddots         &  \cdots   &  \vdots    \\
                 0                       &       0        &       \cdots    &  \lambda_i &          1 \\
                             0          &    0  &      \cdots     &    0     &\lambda_i \\
      \end{array}
    \right)_{d_i\times d_i}
\]
with $\lambda_i\in \{a_1,\ldots,a_r\}$ and $\sum_1^s d_i=d$, see Lancaster and Tismenetsky~\cite[\S6.5, Corollary 1, p.~237]{Lancaster}.
For each $i \in \{1,\dots,s\}$, we have
\begin{align*}
&E_{\alpha,\beta}(B_it^\alpha)\quad =
\quad \sum\limits_{k=0}^\infty\dfrac{(B_it^\alpha)^k}{\Gamma(\alpha k+\beta)}\quad =\quad \sum\limits_{k=0}^\infty\dfrac{t^{\alpha k}B_i^k}{\Gamma(\alpha k+\beta)}\\[8pt]
&=\sum\limits_{k=0}^\infty\dfrac{t^{\alpha k}}{\Gamma(k\alpha+\beta)}
\left(
\begin{array}{ccccc}
C_k^0 \lambda_i^k& C_k^1\lambda_i^{k-1}& C_k^2\lambda_i^{k-2} &\cdots & C_k^{d_i-1}\lambda_i^{k-d_i+1}\smallskip\\
 0 &C_k^0 \lambda_i^k& C_k^1\lambda_i^{k-1}&\cdots & C_k^{d_i-2}\lambda_i^{k-d_i+2}\smallskip \\
        \vdots                      &      \cdots         &\ddots         &  \cdots   &  \vdots  \smallskip  \\
 0& 0&\cdots & C_k^0 \lambda_i^k &C_k^1\lambda_i^{k-1}\smallskip\\
 0 & 0&  \cdots&0&C_k^0 \lambda_i^k
\end{array}
\right)\\[8pt]
&=\sum\limits_{k=0}^\infty\dfrac{t^{\alpha k}}{\Gamma(k\alpha+\beta)}
\left(
\begin{array}{ccccc}
 \lambda_i^k& \dfrac{1}{1!}\dfrac{\partial}{\partial \lambda_i}(\lambda_i^k)& 
\dfrac{1}{2!}\dfrac{\partial^2}{\partial \lambda_i^2}(\lambda_i^k) &
\cdots & \dfrac{1}{(d_i-1)!}\dfrac{\partial^{d_i-1}}{\partial \lambda_i^{d_i-1}}(\lambda_i^k)\smallskip\\
 0 & \lambda_i^k& \dfrac{1}{1!}\dfrac{\partial}{\partial \lambda_i}(\lambda_i^k)&\cdots & \dfrac{1}{(d_i-2)!}\dfrac{\partial^{d_i-2}}{\partial \lambda_i^{d_i-2}}(\lambda_i^k)\smallskip \\
        \vdots                      &      \cdots         &\ddots         &  \cdots   &  \vdots  \smallskip  \\
 0& 0&\cdots &  \lambda_i^k &\dfrac{1}{1!}\dfrac{\partial}{\partial \lambda_i}(\lambda_i^k)\smallskip\\
 0 & 0&  \cdots&0& \lambda_i^k
\end{array}
\right),
\end{align*}
here 
\begin{equation*}
C_k^j=
\begin{cases}
\dfrac{k!}{j!(k-j)!}, &\text{if } k\ge j,\medskip\\
0, & \text{otherwise},
\end{cases}
\end{equation*}
which are the binomial coefficients for $j\leq k$.
This implies 
\[
E_{\alpha,\beta}(B_it^\alpha)=\left(
\begin{array}{ccccc}
E_{\alpha,\beta}(\lambda_it^\alpha)&\dfrac{1}{1!}\dfrac
{\partial}{\partial \lambda_i}E_{\alpha,\beta}(\lambda_it^\alpha)&\cdots&
\dfrac{1}{(d_i-1)!}\dfrac{\partial^{d_i-1}}{\partial \lambda_i^{d_i-1}}E_{\alpha,\beta}(\lambda_1t^\alpha)\smallskip\\
0 &E_{\alpha,\beta}(\lambda_it^\alpha)&\cdots &
\dfrac{1}{(d_i-2)!}\dfrac{\partial^{d_i-2}}{\partial \lambda_i^{d_i-2}}E_{\alpha,\beta}(\lambda_1t^\alpha)\smallskip\\
 \vdots&\cdots&\ddots&\vdots\smallskip\\
0&0&\cdots&E_{\alpha,\beta}(\lambda_it^\alpha)
\end{array}
\right),
\]
cf. Lancaster and Tismenetsky~\cite[\S9.4, Theorem~4, p.~311]{Lancaster}.
Due to the fact that $E_{\alpha}(At^\alpha)=TE_{\alpha}(Bt^\alpha)T^{-1}$ and $E_{\alpha,\alpha}(At^\alpha)=TE_{\alpha,\alpha}(Bt^\alpha)T^{-1}$, to complete the proof of the theorem, it is suffices to prove
$$\lim_{t\to \infty}\|E_\alpha(t^\alpha B)\|=0$$ and
$$\int_0^\infty \tau^{\alpha -1}\| E_{\alpha,\alpha}(\tau^\alpha A)\| \,d\tau<\infty.$$
\noindent (i) By Theorem \ref{thm12}(i), for any $i\in \{1,\dots,s\}$ and $l\in \{0,\dots,d_i-1\}$, we have
\[
\lim_{t\to \infty}|\frac{d^l}{d\lambda_i^l} E_{\alpha}(\lambda_i t^{\alpha})|=0.
\]
Hence 
$$\lim_{t\to \infty}\|E_\alpha(t^\alpha B)\|=0.$$
\noindent (ii) For $t_0$ specified in Theorem~\ref{thm12}, since $\alpha>0$ we have 
\begin{equation}\label{finiteE}
\int_0^{t_0} \tau^{\alpha-1}\|E_{\alpha,\alpha}(\tau^\alpha B)\|\,d\tau <\infty.
\end{equation}
Therefore, to show that $$\int_0^\infty \tau^{\alpha -1}\| E_{\alpha,\alpha}(\tau^\alpha B)\| \,d\tau<\infty,$$
it is suffices to prove that 
$$\int_{t_0}^\infty \tau^{\alpha -1}\| E_{\alpha,\alpha}(\tau^\alpha B)\| \,d\tau<\infty.$$
By Theorem~\ref{thm12}(ii), for any $i\in \{1,\dots,s\}$ and $l\in \{0,\dots,d_i-1\}$, we have
\[
\int_{t_0}^\infty \tau^{\alpha-1}|\frac{d^l}{d \lambda_i^l}E_{\alpha,\alpha}(\lambda_i \tau^\alpha)|\,d\tau
\leq \int_{t_0}^\infty\frac{\hat{M}_l(\alpha,\lambda)}{\tau^{\alpha+1}} \,d\tau
=\frac{\hat{M}_l(\alpha,\lambda)}{\alpha t_0^{\alpha}}.
\]
Therefore, 
$$\int_{t_0}^\infty \tau^{\alpha -1}\| E_{\alpha,\alpha}(\tau^\alpha B)\| \,d\tau<\infty,$$
which together with \eqref{finiteE} implies
$$\int_0^\infty \tau^{\alpha -1}\| E_{\alpha,\alpha}(\tau^\alpha B)\| \,d\tau<\infty.$$
The proof is complete.
\end{proof}

\end{document}